\documentclass[12pt]{amsart}
\usepackage{amssymb, amsthm, enumerate, graphicx, amscd, amsmath, amssymb, amsfonts, xcolor}

\numberwithin{equation}{section}
\newtheorem{theorem}[equation]{Theorem}

\newtheorem{corollary}[equation]{Corollary}

\title{Nonlinear expansions in reproducing kernel Hilbert spaces}

\author{Javad Mashreghi}%
\address{
D\'{e}partement de Math\'{e}matiques et de Statistique\\ 
Universit\'{e} Laval\\ 
Qu\'ebec\\
QC\\
G1V 0A6 \\
Canada} 
\email{Javad.Mashreghi@mat.ulaval.ca}

\author{William Verreault*}
\address{
D\'{e}partement de Math\'{e}matiques et de Statistique\\ 
Universit\'{e} Laval\\ 
Qu\'ebec\\
QC\\
G1V 0A6 \\
Canada} 
\email{william.verreault.2@ulaval.ca}


\keywords{Toeplitz operators, oscillatory expansion, Blaschke product, model spaces}

\subjclass[2020]{30H10, 42C15, 30J10, 30B50, 42C40}

\begin{document}

\begin{abstract}
We introduce an expansion scheme in reproducing kernel Hilbert spaces, which as a special case covers the celebrated Blaschke unwinding series expansion for analytic functions. The expansion scheme is further generalized to cover Hardy spaces $H^p$, $1<p<\infty$, viewed as Banach spaces of analytic functions with bounded evaluation functionals. In this setting a dichotomy is more transparent: depending on the multipliers used, the expansion of $f \in H^p$ converges either to $f$ in $H^p$-norm or to its projection onto a model space generated by the corresponding multipliers. Some explicit instances of the general expansion scheme, which are not covered by the previously known methods, are also discussed.
\end{abstract}

\maketitle

\section{Introduction}
An entire function $f$ has the Taylor series expansion
\begin{equation}\label{E:taylor-expansion}
f(z) = \sum_{n=0}^{\infty} a_n z^n,
\end{equation}
which converges for all values of $z \in \mathbb{C}$. We usually express $a_n$ either by the Brook Taylor formula (1715)
\[
a_n = \frac{f^{(n)}(0)}{n!},
\]
or by the Cauchy integral formula (1875)
\[
a_n = \frac{1}{2\pi i} \int_{\Gamma} \frac{f(z)}{z^{n+1}} \, dz.
\]
In the mid-1990s, R. Coifman had a revolutionary interpretation of the expansion \eqref{E:taylor-expansion}. The idea was developed in further depth in the doctoral thesis of M. Nahon \cite{MR2700759}.

Briefly speaking, their strategy is as follows. We may write $f(z)$ as
\begin{equation}\label{E:taylor-step1}
f(z) = f(0) + \big(f(z)-f(0)\big).
\end{equation}
The first term on the right side is the constant $f(0)=a_0$. The second term is a function that vanishes at the origin. Hence, we can write $f(z)-f(0) = z f_1(z)$, where $f_1$ is another holomorphic function. Thus, we have extracted a zero of $f(z)-f(0)$ as the multiplicative factor $z$, and then we deal with the new function $f_1$ by
iterating the above procedure. More explicitly, we write
\begin{equation}\label{E:taylor-step2}
f_1(z) = f_1(0) + \big(f_1(z)-f_1(0)\big),
\end{equation}
and $f_1(z)-f_1(0) = z f_2(z)$ for yet another holomorphic function $f_2$. Plugging back \eqref{E:taylor-step2} into \eqref{E:taylor-step1}, we see that
\[
f(z) = f(0) + f_1(0)z+z^2f_2(z),
\]
which at the same times shows $f_1(0)=a_1$. If we continue this procedure infinitely many times, we obtain
\begin{equation}\label{E:taylor-step3}
f(z) = f(0) + f_1(0)z+f_2(0)z^2+\cdots,
\end{equation}
which is a new interpretation of the Taylor series expansion \eqref{E:taylor-expansion}. The novel vision of the Coifman school was to extract {\em more} zeros in each step via finite Blaschke products. In the above procedure, instead of factoring out the zero at the origin, we may factor out all the zeros in the open unit disc $\mathbb{D}$. Since $f$ is entire, it has finitely many zeros in $\mathbb{D}$, and thus an appropriate apparatus to extract zeros in $\mathbb{D}$ are finite Blaschke products. Given $\alpha_1,\dots,\alpha_N \in \mathbb{D}$, repetition allowed, the rational function
\[
B(z) = \prod_{n=1}^{N} \frac{\alpha_n-z}{1-\bar{\alpha}_nz}
\]
is called a {\em finite Blaschke product}. For a treatment of this topic, we refer to the monographs \cite{MR3793610, MR2986324}. After the initial step \eqref{E:taylor-step1}, we factor $f(z)-f(0)$ as
\[
f(z)-f(0) = B_1(z) f_1(z),
\]
where $f_1$ is analytic on $\overline{\mathbb{D}}$ and has no roots in $\mathbb{D}$. All roots of $f(z)-f(0)$ in $\mathbb{D}$, including the root at the origin, and counting their multiplicities, are gathered in $B_1$. Hence, we can write
\[
f(z) = f(0) + B_1(z) f_1(z).
\]
Since $f_1$ is a holomorphic function on $\overline{\mathbb{D}}$, we can iterate the above procedure with $f_1(z) - f_1(0)$. The outcome, after the second step, is
\[
f(z) = f(0) + f_1(0)B_1(z) + B_1(z)B_2(z)f_2(z).
\]
If we reiterate infinitely many times, we obtain the expansion
\begin{equation}\label{coifman-expansion}
f(z) = f(0) + f_1(0)B_1(z) + f_2(0)B_1(z)B_2(z) + \cdots,
\end{equation}
which is known as the {\em Blaschke unwinding series expansion} of $f$.

The convergence of the Blaschke unwinding series was a major question for a long period. In fact, some of Nahon's numerical experiments suggested that, for functions in $H^2$, the unwinding expansion converges to the original function in mean. However, this fact was only firmly proved later by T. Qian \cite{MR2662313}. This question was again studied by Coifman and Steinerberger in \cite{MR3685990}, where they obtained more general results for convergence in weighted subspaces of $H^2$. Eventually, Coifman and Peyri\`{e}re \cite{MR3953482} proved the convergence of the inner-outer unwinding series for functions in the Hardy spaces $H^p$.

In parallel and independently, Qian and collaborators came up with a similar process to obtain a nonlinear phase unwinding of holomorphic functions, which they called \textit{adaptive Fourier decomposition}  \cite{MR2662313, MR2776445, MR2907888}. Variants of this type of unwinding have been further investigated in several papers, with a strong emphasis on the applications. Their idea is to start with the Takenaka--Malmquist--Walsh basis. Then, to decompose a function $f\in H^2$ with respect to this basis, they use an adaptive algorithm akin to a greedy algorithm which relies on the existence of a point in $\mathbb{D}$ that minimizes the distance from $f$ to the partial sums of its unwinding series.

In this work, we provide a general expansion scheme, which can be viewed as a generalization of the Blaschke unwinding series expansion \eqref{coifman-expansion}. We perform this in two contexts. First, in Section \ref{S:expansion-rkhs}, we present a scheme followed by two convergence results in Theorems \ref{T:conv-1-rkhs} and \ref{T:conv-2-rkhs}, in the general setting of reproducing kernel Hilbert spaces (RKHS). To some extent, these results can be extended to reproducing kernel Banach spaces. However, since in this note, our goal is to extend the above approach to Hardy spaces, we devoted the subsequent section to a more detailed study of $H^p$ spaces. More explicitly, the construction is as follows. We consider a sequence $(b_n)_{n \geq 1}$ of elements in the closed unit ball of $H^\infty$ (not necessarily Blaschke products, or even inner functions). Then using the co-analytic Toeplitz operator $T_{\bar{b}_k}$ and the closely related operators $Q_{b_k}$, we introduce the expansion
\[
f = Q_{b_1}f + b_1 \cdot Q_{b_2}T_{\bar{b}_1}f + b_1b_2 \cdot Q_{b_3}T_{\bar{b}_1\bar{b}_2}f + b_1b_2b_3 \cdot Q_{b_4}T_{\bar{b}_1\bar{b}_2\bar{b}_3}f +\cdots
\]
for an arbitrary element $f \in H^p$. The procedure is explained in Section \ref{S:expansion} and the convergence problem is addressed in Theorems \ref{T:conv-1} and \ref{T:conv-2}. In Section \ref{S:taylor}, we show that as a very special case, Theorem \ref{T:conv-1} leads to the Taylor series expansion. In Section \ref{Blaschke-section}, we show that when the $b_k$s are Blaschke factors, the general expansion scheme gives the previously known expansions which were described above, namely the Blaschke unwinding and the adaptive Fourier decomposition. In Section \ref{S:outer-expansion}, as a prototypical example, we study a special expansion created by outer functions, which was not possible with previously known expansions.

\section{The nonlinear expansion in RKHS} \label{S:expansion-rkhs}
Let $\mathcal{H}$ be an RKHS on $X$ with the multiplier algebra $\mathcal{M}(\mathcal{H})$. For a thorough treatment of RKHS, see  \cite{MR3526117}. Let $\phi \in \mathcal{M}(\mathcal{H})$ and define
\begin{equation}\label{E:def-PQ}
P_{\phi} := M_{\phi} M_{\phi}^*
\quad\mbox{and}\quad
Q_{\phi} :=I- M_{\phi} M_{\phi}^*,
\end{equation}
where $M_\phi(f)=\phi f$ is the multiplication operator on $\mathcal{H}$. While $P_{\phi}$ and $Q_{\phi}$ are certainly bounded operators, in general, they are not necessarily projections (idempotent). As a matter of fact, it is easy to see that they are projections if and only if
\[
M_{\phi}^{*}(\phi k_x) = k_x, \qquad x \in X,
\]
where $k_x$ is the reproducing kernel of $\mathcal{H}$.

The scheme is as follows. Trivially $Q_{\phi} + P_{\phi} = I$.  Hence, we have
\[
g = Q_{\phi}g + P_{\phi}g, \qquad g \in \mathcal{H},
\]
or equivalently
\begin{equation}\label{E:step0-rkhs}
g = Q_{\phi}g + \phi M_{\phi}^{*}g, \qquad g \in \mathcal{H}.
\end{equation}
This elementary observation is actually the building block in generalizing the Blaschke unwinding series expansion \eqref{coifman-expansion}.

To continue, let $(\phi_n)_{n \geq 1}$ be a sequence of elements in the closed unit ball of $\mathcal{M}(\mathcal{H})$. Fix $f \in \mathcal{H}$. Then, by \eqref{E:step0-rkhs} with $\phi=\phi_1$,
\begin{equation}\label{E:step1-rkhs}
f = Q_{\phi_1}f + \phi_1 M_{\phi_1}^{*}f.
\end{equation}
Then we apply \eqref{E:step0-rkhs} with $\phi=\phi_2$ to $g=M_{\phi_1}^{*}f$ to obtain
\begin{equation}\label{E:step2-rkhs}
M_{\phi_1}^{*}f = Q_{\phi_2}M_{\phi_1}^{*}f + \phi_2 M_{\phi_2}^{*}M_{\phi_1}^{*}f.
\end{equation}
If we plug \eqref{E:step2-rkhs} back into \eqref{E:step1-rkhs}, it gives
\begin{equation}\label{E:step3-rkhs}
f = Q_{\phi_1}f + \phi_1 Q_{\phi_2}M_{\phi_1}^{*}f + \phi_1\phi_2 M_{\phi_1\phi_2}^{*}f.
\end{equation}
Note that we implicitly used $M_{\phi_2}^{*}M_{\phi_1}^{*} = M_{\phi_1\phi_2}^{*}$. We continue and apply again \eqref{E:step0-rkhs} with $\phi=\phi_3$ and $g=M_{\phi_1\phi_2}^{*}f$ and plug it back into \eqref{E:step3-rkhs} to obtain
\begin{equation}\label{E:step4-rkhs}
f = Q_{\phi_1}f + \phi_1 Q_{\phi_2}M_{\phi_1}^{*}f + \phi_1\phi_2 Q_{\phi_3}M_{\phi_1\phi_2}^{*}f + \phi_1\phi_2\phi_3 M_{\phi_1\phi_2\phi_3}^{*}f.
\end{equation}
By induction, we can continue this procedure as many times as we wish. The general convergence theorem is as follows.

\begin{theorem} \label{T:conv-1-rkhs}
Let $\mathcal{H}$ be an RKHS on $X$ with the multiplier algebra $\mathcal{M}(\mathcal{H})$. Let $(\phi_n)_{n \geq 1}$ be a sequence of elements in the closed unit ball of $\mathcal{M}(\mathcal{H})$. Write $\Phi_0=1$ and
\[
\Phi_n := \phi_1\phi_2\cdots \phi_n, \qquad n \geq 1.
\]
Assume that
\[
\lim_{n \to \infty} \Phi_n(x) = 0, \qquad x \in X.
\]
Then, for each $f \in \mathcal{H}$,
\[
f = \sum_{n=1}^{\infty} \Phi_{n-1} \cdot Q_{\phi_n} M_{\Phi_{n-1}}^{*}f,
\]
where the series converges in $\mathcal{H}$.
\end{theorem}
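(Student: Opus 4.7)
My plan has two phases: first identify the partial sums and remainder, then prove the remainder tends to zero.

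The partial sum identity comes from a routine induction on the scheme already illustrated in \eqref{E:step1-rkhs}--\eqref{E:step4-rkhs}: applying \eqref{E:step0-rkhs} with $\phi=\phi_{n+1}$ to $g=M_{\Phi_n}^{*}f$, using the multiplicativity $M_{\phi_{n+1}}^{*}M_{\Phi_n}^{*}=M_{\Phi_{n+1}}^{*}$, and then multiplying through by $\Phi_n$, one obtains the identity
\[
f = \sum_{n=1}^{N} \Phi_{n-1}\cdot Q_{\phi_n} M_{\Phi_{n-1}}^{*}f \;+\; \Phi_N\cdot M_{\Phi_N}^{*}f, \qquad N\geq 1.
\]
Writing $R_N := \Phi_N\,M_{\Phi_N}^{*}f = P_{\Phi_N}f$, the theorem reduces to showing $\|R_N\|_{\mathcal{H}} \to 0$.

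Next I would record the uniform bounds. Since the multiplier norm coincides with the operator norm of $M_\phi$ and each $\phi_n$ lies in the closed unit ball of $\mathcal{M}(\mathcal{H})$, submultiplicativity gives $\|M_{\Phi_N}\|\leq 1$. Therefore $\|M_{\Phi_N}^{*}f\|\leq\|f\|$ and $\|R_N\|\leq\|f\|$ for all $N$. For pointwise behavior, the reproducing property and Cauchy--Schwarz give
\[
\bigl|(M_{\Phi_N}^{*}f)(x)\bigr| = \bigl|\langle M_{\Phi_N}^{*}f,k_x\rangle\bigr| \;\leq\; \|f\|\,\|k_x\|,
\]
so that $|R_N(x)| \leq |\Phi_N(x)|\,\|f\|\,\|k_x\| \to 0$ for every $x\in X$, by the standing hypothesis.

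The main obstacle is upgrading this pointwise (equivalently, weak) decay to norm decay, because the terms $Q_{\phi_n}$ need not be orthogonal projections, so no Pythagorean identity is available. The key trick I would use is to write
\[
\|M_{\Phi_N}^{*}f\|^{2} = \langle M_{\Phi_N}^{*}f,\,M_{\Phi_N}^{*}f\rangle = \langle M_{\Phi_N}M_{\Phi_N}^{*}f,\,f\rangle = \langle R_N,\,f\rangle.
\]
Because the linear span of the reproducing kernels is dense in $\mathcal{H}$, the uniform bound $\|R_N\|\leq\|f\|$ combined with pointwise convergence $R_N(x)\to 0$ yields weak convergence $R_N\rightharpoonup 0$ in $\mathcal{H}$, hence $\langle R_N, f\rangle\to 0$. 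Thus $\|M_{\Phi_N}^{*}f\|\to 0$, and finally
\[
\|R_N\| = \|M_{\Phi_N} M_{\Phi_N}^{*}f\| \leq \|M_{\Phi_N}\|\cdot\|M_{\Phi_N}^{*}f\| \leq \|M_{\Phi_N}^{*}f\| \to 0,
\]
which is exactly what we needed. The series therefore converges in $\mathcal{H}$ to $f$.
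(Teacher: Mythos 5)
Your proof is correct, and while it shares the paper's skeleton (the same inductive partial-sum identity and the same reduction to showing $\|M_{\Phi_N}^{*}f\|_{\mathcal{H}}\to 0$), the mechanism you use to upgrade pointwise decay to norm decay is genuinely different. The paper exploits the eigenvector relation $M_{\Phi_N}^{*}k_x=\overline{\Phi_N(x)}\,k_x$, so that $M_{\Phi_N}^{*}$ converges to zero \emph{in norm} on each kernel function, and then runs the standard argument that a uniformly bounded sequence of operators converging strongly on a dense set converges strongly everywhere. You instead establish only \emph{weak} convergence $R_N=P_{\Phi_N}f\rightharpoonup 0$ (from the pointwise bound $|R_N(x)|\leq|\Phi_N(x)|\,\|f\|\,\|k_x\|$ plus the uniform bound $\|R_N\|\leq\|f\|$ and density of the kernel span), and then convert weak to norm convergence through the quadratic-form identity $\|M_{\Phi_N}^{*}f\|^2=\langle M_{\Phi_N}M_{\Phi_N}^{*}f,f\rangle=\langle R_N,f\rangle$. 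Both arguments are complete; yours is slightly slicker in that it never needs the kernels to be eigenvectors of $M_{\Phi_N}^{*}$, only the multiplier bound and the reproducing property. The trade-off is that your $T T^{*}$ trick is intrinsically Hilbert-space bound, whereas the paper's strong-convergence-on-a-dense-set argument is exactly the one that transfers verbatim to the Banach setting of Theorem \ref{T:conv-1}, where $\|T_{\bar B_N}f\|_{H^p}\to 0$ is proved the same way with the M.~Riesz constant $c_p$ replacing the bound $1$; no inner-product identity is available there.
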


\begin{proof}
By induction, the general formula for \eqref{E:step3-rkhs} and \eqref{E:step4-rkhs} is
\[
f = \sum_{n=1}^{N} \Phi_{n-1} Q_{\phi_n}M_{\Phi_{n-1}}^{*}f + \Phi_{N} M_{\Phi_{N}}^{*}f.
\]
Since
\[
\|\Phi_{N} M_{\Phi_{N}}^{*} f\|_{\mathcal{H}} \leq \|\Phi_{N}\|_{\mathcal{M}(\mathcal{H})} \|M_{\Phi_{N}}^{*} f\|_{\mathcal{H}} \leq \|M_{\Phi_{N}}^{*} f\|_{\mathcal{H}},
\]
it is enough to show that
\[
\|M_{\Phi_{N}}^{*} f\|_{\mathcal{H}} \to 0
\]
as $N \to \infty$.

Recall that
\[
M_{\Phi_{N}}^{*} k_{x} = \overline{\Phi_N(x)}\, k_x, \qquad x \in \mathcal{H}.
\]
Hence,
\[
\|M_{\Phi_{N}}^{*} k_{x}\|_{\mathcal{H}} = |\Phi_N(x)| \, \|k_{x}\|_{\mathcal{H}}.
\]
Therefore, according to our main assumption,
\begin{equation}\label{E:lim-kx=0}
\lim_{N \to \infty} \|M_{\Phi_{N}}^{*} k_{x}\|_{\mathcal{H}} =0, \qquad x \in X.
\end{equation}

For a general $f \in \mathcal{H}$, we use two properties of reproducing kernels and multipliers of $\mathcal{H}$. First, the linear span of the kernel functions is dense in $\mathcal{H}$. Second, the operators $M_{\Phi_{N}}^{*}$ are uniformly bounded. More explicitly, given $f \in \mathcal{H}$ and $\varepsilon>0$, there are constants $\alpha_1,\dots,\alpha_m \in \mathbb{C}$ and points $x_1,\dots,x_m \in X$ such that
\[
\|f-(\alpha_1 k_{x_1} + \cdots + \alpha_m k_{x_m})\|_{\mathcal{H}} < \varepsilon.
\]
Hence,
\begin{eqnarray*}
\|M_{\Phi_{N}}^{*}f\|_{\mathcal{H}}
&\leq& \|M_{\Phi_{N}}^{*}[f-(\alpha_1 k_{x_1} + \cdots + \alpha_m k_{x_m})]\|_{\mathcal{H}}\\
&+& \|M_{\Phi_{N}}^{*}(\alpha_1 k_{x_1} + \cdots + \alpha_m k_{x_m})\|_{\mathcal{H}}\\
&\leq& \|\Phi_{N}\|_{\mathcal{M}(\mathcal{H})} \|f-(\alpha_1 k_{x_1} + \cdots + \alpha_m k_{x_m})\|_{\mathcal{H}}\\
&+& |\alpha_1| \, \|M_{\Phi_{N}}^{*}k_{x_1}\|_{\mathcal{H}} + \cdots + |\alpha_m| \,  \|M_{\Phi_{N}}^{*}k_{x_m}\|_{\mathcal{H}}\\
&\leq&  \varepsilon+ |\alpha_1| \, \|M_{\Phi_{N}}^{*}k_{x_1}\|_{\mathcal{H}} + \cdots + |\alpha_m| \,  \|M_{\Phi_{N}}^{*}k_{x_m}\|_{\mathcal{H}}.
\end{eqnarray*}
By \eqref{E:lim-kx=0}, we see that
\[
\limsup_{N \to \infty} \|M_{\Phi_{N}}^{*}f\|_{\mathcal{H}}  \leq \varepsilon.
\]
Since $\varepsilon>0$ is arbitrary, the result follows.
\end{proof}

In Theorem \ref{T:conv-1-rkhs}, we assume that the multipliers are arranged so that
\[
\lim_{n \to \infty} \Phi_n(x) = 0, \qquad x \in X.
\]
In general, this is not necessarily the case. In fact, quite often we end up with
\[
\lim_{n \to \infty} \Phi_n(x) = \Phi(x), \qquad x \in X,
\]
where $\Phi$ is a non-zero multiplier of $\mathcal{H}$. In this case, an extra term appears which is linked to the model spaces (see next section). In the general setting, the result is as follows.

\begin{theorem} \label{T:conv-2-rkhs}
Let $\mathcal{H}$ be an RKHS on $X$ with the multiplier algebra $\mathcal{M}(\mathcal{H})$. Let $(\phi_n)_{n \geq 1}$ be a sequence of elements in the closed unit ball of $\mathcal{M}(\mathcal{H})$. Write $\Phi_0=1$ and
\[
\Phi_n := \phi_1\phi_2\cdots \phi_n, \qquad n \geq 1.
\]
Assume that there is a multiplier $\Phi \in \mathcal{M}(\mathcal{H})$ such that
\[
\lim_{n \to \infty} \Phi_n(x) = \Phi(x), \qquad x \in X.
\]
Then, for each $f \in \mathcal{H}$,
\[
f = \Phi M_{\Phi}^{*}f + \sum_{n=1}^{\infty} \Phi_{n-1}\cdot  Q_{\phi_n} M_{\Phi_{n-1}}^{*}f,
\]
where the series converges in $\mathcal{H}$.
\end{theorem}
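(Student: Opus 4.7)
My plan is to mirror the proof of Theorem \ref{T:conv-1-rkhs}, with an extra factorization step to identify the nonzero limit of the remainder. Iterating \eqref{E:step0-rkhs} exactly as before yields, for every $N \ge 1$, the telescoping identity
\[
f = \sum_{n=1}^{N} \Phi_{n-1} \cdot Q_{\phi_n} M_{\Phi_{n-1}}^{*} f + \Phi_{N} M_{\Phi_{N}}^{*} f,
\]
so the task reduces to proving $\Phi_{N} M_{\Phi_{N}}^{*} f \to \Phi M_{\Phi}^{*} f$ in $\mathcal{H}$. The first step is to establish the auxiliary fact $M_{\Phi_{N}}^{*} f \to M_{\Phi}^{*} f$ strongly in $\mathcal{H}$: on a reproducing kernel it is immediate, since $(M_{\Phi_N}^{*} - M_{\Phi}^{*}) k_x = \overline{(\Phi_N - \Phi)(x)} \, k_x$ has norm $|\Phi_N(x) - \Phi(x)| \, \|k_x\|_{\mathcal{H}}$, and the extension to all of $\mathcal{H}$ uses exactly the density-of-kernels plus uniform-boundedness scheme of Theorem \ref{T:conv-1-rkhs}.

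The decisive step is a factorization. Because $\Phi_{n+1} = \Phi_n \phi_{n+1}$ with each $\phi_n$ in the closed unit ball, the pointwise tail $\Psi_N := \prod_{k > N} \phi_k$ is well defined, lies in the closed unit ball of $\mathcal{M}(\mathcal{H})$, and satisfies $\Phi = \Phi_N \Psi_N$ together with $\Psi_N(x) \to 1$ at every $x$ where $\Phi(x) \ne 0$. Using $M_{\Phi}^{*} = M_{\Psi_N}^{*} M_{\Phi_N}^{*}$, a direct algebraic computation gives
\[
\Phi_{N} M_{\Phi_{N}}^{*} f - \Phi M_{\Phi}^{*} f = \Phi_{N} \bigl(I - M_{\Psi_N} M_{\Psi_N}^{*}\bigr) M_{\Phi_{N}}^{*} f = \Phi_{N} \cdot Q_{\Psi_N} M_{\Phi_{N}}^{*} f,
\]
which reduces everything to showing $Q_{\Psi_N} M_{\Phi_{N}}^{*} f \to 0$ in $\mathcal{H}$.

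For this last step, since $\|Q_{\Psi_N}\| \le 2$ and $M_{\Phi_N}^{*} f \to M_{\Phi}^{*} f$ strongly, it suffices to prove $Q_{\Psi_N} g \to 0$ for the single fixed vector $g := M_{\Phi}^{*} f$. On a kernel $k_x$, a direct expansion of $\|k_x - \overline{\Psi_N(x)} \, \Psi_N k_x\|^{2}$ combined with $\|\Psi_N k_x\|_{\mathcal{H}} \le \|k_x\|_{\mathcal{H}}$ yields the clean bound
\[
\|Q_{\Psi_N} k_x\|_{\mathcal{H}}^{2} \le \bigl(1 - |\Psi_N(x)|^{2}\bigr) \, \|k_x\|_{\mathcal{H}}^{2},
\]
which tends to zero since $|\Psi_N(x)| \to 1$, and the conclusion for arbitrary $g$ follows by the density-plus-uniform-boundedness mechanism one more time. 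I expect the main obstacle to be the factorization step, namely verifying that $\Psi_N$ really defines an element of $\mathcal{M}(\mathcal{H})$ with $\Psi_N(x) \to 1$ where $\Phi(x) \ne 0$; in the concrete Hardy space setting treated in the sequel this is automatic from Montel's theorem, but in the abstract RKHS setting it is the point requiring care. Once this is in hand, the remainder of the proof is a direct adaptation of the argument for Theorem \ref{T:conv-1-rkhs}.
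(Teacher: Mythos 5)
The paper states this theorem without proof (it is the analogue of Theorem \ref{T:conv-2}, which the authors dismiss as ``a minor modification''), so there is no official argument to compare against; judged on its own terms, your skeleton --- telescoping identity, strong convergence $M_{\Phi_N}^*f\to M_\Phi^*f$, and the algebraic identity $\Phi_N M_{\Phi_N}^*f-\Phi M_\Phi^*f=\Phi_N\,Q_{\Psi_N}M_{\Phi_N}^*f$ --- is sound, and the kernel estimate $\|Q_{\Psi_N}k_x\|^2\le(1-|\Psi_N(x)|^2)\|k_x\|^2$ is correct. But there are two genuine gaps.

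First, the factorization step, which you yourself flag, is not established. In a general RKHS the tail product $\prod_{k>N}\phi_k(x)$ need not converge at points where $\Phi_N(x)=0$ (the partial tail products there are unconstrained by the hypothesis), and even where it converges you must still show the limit is a \emph{contractive multiplier} satisfying $M_\Phi=M_{\Phi_N}M_{\Psi_N}$. This is repairable --- define $\Psi_N:=\Phi/\Phi_N$ on $\{\Phi_N\neq0\}$, take a WOT cluster point of the contractions $M_{\phi_{N+1}\cdots\phi_M}$, and check it is a multiplication operator by testing adjoints on kernels --- but as written the step is asserted, not proved. Second, and more seriously, your final sentence ``the conclusion for arbitrary $g$ follows by the density-plus-uniform-boundedness mechanism'' is false as stated: $|\Psi_N(x)|\to1$ only where $\Phi(x)\neq0$, and at a zero of $\Phi$ not carried by any single factor one has $\Psi_N(x)=\Phi(x)/\Phi_N(x)=0$ for every $N$, so $Q_{\Psi_N}k_x=k_x\not\to0$ and $Q_{\Psi_N}$ does \emph{not} tend to zero strongly on all of $\mathcal{H}$. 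The argument is saved only because your specific vector $g=M_\Phi^*f$ lies in $\overline{\operatorname{span}}\{k_x:\Phi(x)\neq0\}$ (since $M_\Phi^*k_x=\overline{\Phi(x)}\,k_x$ and $M_\Phi^*$ is bounded), so the density argument can be run inside that subspace --- but you reduced to this particular $g$ and then never used its special form, which is exactly the point where the proof currently breaks. A cleaner route avoiding both issues: $P_{\Phi_N}=M_{\Phi_N}M_{\Phi_N}^*$ satisfies $0\le P_{\Phi_{N+1}}\le P_{\Phi_N}\le I$ because $M_{\phi_{N+1}}M_{\phi_{N+1}}^*\le I$, so $P_{\Phi_N}$ converges strongly to a positive contraction; on each kernel $P_{\Phi_N}k_x=\overline{\Phi_N(x)}\,\Phi_Nk_x$ converges weakly to $\overline{\Phi(x)}\,\Phi k_x$, and since the strong and weak limits must agree, the strong limit is $M_\Phi M_\Phi^*$.
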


\section{The expansion in Hardy spaces} \label{S:expansion}
The Hardy spaces $H^p$, $1<p<\infty$, $p \neq 2$, are not Hilbert spaces. However, with appropriate adjustments, the expansions discussed in Theorems \ref{T:conv-1-rkhs} and \ref{T:conv-2-rkhs} can be extended to this setting. For this purpose, some facts from the theory of Toeplitz operators are needed. For basics of Toeplitz operators, see \cite[Ch. 4]{MR3526203}, and for the theory of Hardy spaces, we refer to \cite{Duren, MR2500010}.

Let $\varphi \in L^\infty(\mathbb{T})$, and let $1<p<\infty$. Then, the Toeplitz operator $T_{\varphi}: H^p \to H^p$ is defined by
\[
T_{\varphi}f = P_+(\varphi f), \qquad f \in H^p,
\]
where $P_+$ is the M. Riesz projection of $L^p$ onto $H^p$. As a consequence of a celebrated result of M. Riesz on the boundedness of the Hilbert transform, $T_{\varphi}$ is also bounded and moreover
\begin{equation}\label{E:toeplitz-0}
\|T_{\varphi}\|_{H^p \to H^p} \leq c_p \|\varphi\|_{H^\infty}.
\end{equation}
It is well-known that if $\varphi$ and $\psi$ are in $H^\infty$, then
\begin{equation}\label{E:toeplitz-1}
T_{\varphi}f = \varphi f, \qquad f \in H^p,
\end{equation}
and
\begin{equation}\label{E:toeplitz-2}
T_{\bar{\varphi}} T_{\bar{\psi}} = T_{\bar{\psi}} T_{\bar{\varphi}} = T_{\bar{\varphi}\bar{\psi}}.
\end{equation}

Let $b$ be an element of the closed unit ball of $H^\infty$. As in \eqref{E:def-PQ}, let
\[
P_b := T_b T_{\bar{b}}
\quad\mbox{and}\quad
Q_b:=I-P_b.
\]
Here, the bounded operators $P_b$ and $Q_b$ are projections if and only if $b$ is an inner function. We may trivially write
\[
f = Q_bf + P_bf, \qquad f \in H^p,
\]
or equivalently, by \eqref{E:toeplitz-1},
\begin{equation}\label{E:step0}
f = Q_bf + b T_{\bar{b}}f, \qquad f \in H^p.
\end{equation}
As before, this identity is actually the basic step in generalizing the expansion scheme to Hardy spaces.

Let $(b_n)_{n \geq 1}$ be a sequence of elements in the closed unit ball of $H^\infty$. Fix $f \in H^p$. Then, by \eqref{E:step0} with $b=b_1$,
\begin{equation}\label{E:step1}
f = Q_{b_1}f + b_1 T_{\bar{b}_1}f.
\end{equation}
Then we apply \eqref{E:step0} with $b=b_2$ to $T_{\bar{b}_1}f$ to obtain
\begin{equation}\label{E:step2}
T_{\bar{b}_1}f = Q_{b_2}T_{\bar{b}_1}f + b_2 T_{\bar{b}_2}T_{\bar{b}_1}f.
\end{equation}
If we plug back \eqref{E:step2} into \eqref{E:step1} and also use \eqref{E:toeplitz-2}, we obtain
\begin{equation}\label{E:step3}
f = Q_{b_1}f + b_1 Q_{b_2}T_{\bar{b}_1}f + b_1b_2 T_{\bar{b}_1\bar{b}_2}f.
\end{equation}
The general convergence theorem is as follows.

\begin{theorem} \label{T:conv-1}
Let $(b_n)_{n \geq 1}$ be a sequence of elements in the closed unit ball of $H^\infty$. Write $B_0=1$ and
\[
B_n := b_1b_2\cdots b_n, \qquad n \geq 1.
\]
Assume that
\[
\lim_{n \to \infty} B_n(z) = 0, \qquad z \in \mathbb{D}.
\]
Then, for each $f \in H^p$,
\[
f = \sum_{n=1}^{\infty} B_{n-1}\cdot  Q_{b_n}T_{\bar{B}_{n-1}}f,
\]
where the series converges in $H^p$-norm.
\end{theorem}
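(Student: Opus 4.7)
The plan is to mirror the RKHS proof of Theorem \ref{T:conv-1-rkhs}, with the Cauchy kernels $k_w(z) = (1-\bar{w}z)^{-1}$ replacing the abstract reproducing kernels. First I would iterate \eqref{E:step0} by induction: starting from \eqref{E:step3} and applying \eqref{E:step0} to $T_{\bar{B}_n}f$ with $b = b_{n+1}$, then using \eqref{E:toeplitz-2} to collapse $T_{\bar{b}_{n+1}}T_{\bar{B}_n}$ into $T_{\bar{B}_{n+1}}$, I obtain the finite telescoping identity
\[
f = \sum_{n=1}^{N} B_{n-1}\cdot Q_{b_n}T_{\bar{B}_{n-1}}f + B_N T_{\bar{B}_N}f, \qquad N \geq 1.
\]
Since $\|B_N\|_{H^\infty} \leq 1$, the remainder is controlled by $\|B_N T_{\bar{B}_N}f\|_{H^p} \leq \|T_{\bar{B}_N}f\|_{H^p}$, so the convergence claim reduces to showing $\|T_{\bar{B}_N}f\|_{H^p} \to 0$ for every $f \in H^p$.

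Second, I would establish the eigen-identity $T_{\bar{B}_N}k_w = \overline{B_N(w)}\,k_w$ for each $w \in \mathbb{D}$. This is a direct computation: writing $B_N(\zeta) = \sum_{j\geq 0} c_j\zeta^j$ and $k_w(\zeta) = \sum_{n\geq 0}\bar{w}^n\zeta^n$ on the circle, applying $P_+$ to $\bar{B}_N(\zeta)k_w(\zeta)$ and collecting non-negative powers of $\zeta$ produces $\overline{B_N(w)}\,k_w(z)$. Since $k_w \in H^\infty \subset H^p$, the hypothesis $B_N(w) \to 0$ then gives $\|T_{\bar{B}_N}k_w\|_{H^p} \to 0$ for every $w$, and hence on the whole linear span of Cauchy kernels.

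Finally, to pass from this dense set to all of $H^p$, I would invoke the uniform bound $\|T_{\bar{B}_N}\|_{H^p\to H^p} \leq c_p\|B_N\|_{H^\infty} \leq c_p$ supplied by \eqref{E:toeplitz-0}, together with the density of $\mathrm{span}\{k_w:w\in\mathbb{D}\}$ in $H^p$ (any $g \in H^{p'}$ annihilating every $k_w$ under the standard pairing must vanish identically on $\mathbb{D}$). A routine three-$\varepsilon$ argument then finishes the proof exactly as in the RKHS case. The only genuinely new ingredient beyond that proof is the appeal to M.~Riesz's theorem through \eqref{E:toeplitz-0}, which is what makes the uniform bound on the $T_{\bar{B}_N}$ available in a Banach-space setting; once this is in hand, no idempotency of $P_{b_n}$ or $Q_{b_n}$ is needed, and the argument is a verbatim transcription of the Hilbert-space version.
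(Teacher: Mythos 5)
Your proposal is correct and follows essentially the same route as the paper's proof: the telescoping identity reducing everything to $\|T_{\bar{B}_N}f\|_{H^p}\to 0$, the eigenvector relation $T_{\bar{B}_N}k_w=\overline{B_N(w)}\,k_w$, and the density-plus-uniform-boundedness argument resting on the M.~Riesz bound \eqref{E:toeplitz-0}. The only difference is that you supply explicit verifications (the Fourier computation for the eigen-identity and the duality argument for density of the Cauchy kernels) that the paper simply cites as known facts.
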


\begin{proof}
By induction, the general formula for \eqref{E:step3} is
\[
f = \sum_{n=1}^{N} B_{n-1} Q_{b_n}T_{\bar{B}_{n-1}}f + B_{N} T_{\bar{B}_{N}}f.
\]
Since
\[
\|B_{N} T_{\bar{B}_{N}}f\|_{H^p} \leq \|T_{\bar{B}_{N}}f\|_{H^p},
\]
it is enough to show that
\[
\|T_{\bar{B}_{N}}f\|_{H^p} \to 0
\]
as $N \to \infty$.

Let $k_\lambda$ denote the Cauchy kernel, i.e., $k_{\lambda}(z) = (1-\bar{\lambda}z)^{-1}$. A celebrated property of conjugate-analytic Toeplitz operators is their unusual abundance of eigenvalues and eigenvectors:
\[
T_{\bar{\varphi}}k_\lambda = \overline{\varphi(\lambda)}\, k_\lambda, \qquad \lambda \in \mathbb{D}, \, \varphi \in H^\infty.
\]
Hence, in light of \eqref{E:toeplitz-2}, for each $\lambda \in \mathbb{D}$,
\[
T_{\bar{B}_{N}} k_{\lambda} = \overline{B_N(\lambda)}\, k_\lambda,
\]
and thus
\[
\|T_{\bar{B}_{N}} k_{\lambda}\|_{H^p} = |B_N(\lambda)| \, \|k_{\lambda}\|_{H^p}.
\]
Therefore, according to our main assumption,
\[
\lim_{N \to \infty} \|T_{\bar{B}_{N}} k_{\lambda}\|_{H^p} =0.
\]

For a general $f \in H^p$, incidentally we know that the span of the Cauchy kernels is also dense in $H^p$.  Therefore, given $f \in H^p$ and $\varepsilon>0$, there are constants $\alpha_1,\dots,\alpha_m \in \mathbb{C}$ and points $\lambda_1,\dots,\lambda_m \in \mathbb{D}$ such that
\[
\|f-(\alpha_1 k_{\lambda_1} + \cdots + \alpha_m k_{\lambda_m})\|_{H^p} < \varepsilon.
\]
Hence, by \eqref{E:toeplitz-0},
\begin{eqnarray*}
\|T_{\bar{B}_{N}}f\|_{H^p}
&\leq& \|T_{\bar{B}_{N}}[f-(\alpha_1 k_{\lambda_1} + \cdots + \alpha_m k_{\lambda_m})]\|_{H^p}\\
&+& \|T_{\bar{B}_{N}}(\alpha_1 k_{\lambda_1} + \cdots + \alpha_m k_{\lambda_m})\|_{H^p}\\
&\leq& c_p \|\bar{B}_{N}\|_{H^\infty} \|f-(\alpha_1 k_{\lambda_1} + \cdots + \alpha_m k_{\lambda_m})\|_{H^p}\\
&+& |\alpha_1| \, \|T_{\bar{B}_{N}}k_{\lambda_1}\|_{H^p} + \cdots + |\alpha_m| \,  \|T_{\bar{B}_{N}}k_{\lambda_m}\|_{H^p}\\
&\leq& c_p \varepsilon+ |\alpha_1| \, \|T_{\bar{B}_{N}}k_{\lambda_1}\|_{H^p} + \cdots + |\alpha_m| \,  \|T_{\bar{B}_{N}}k_{\lambda_m}\|_{H^p}.
\end{eqnarray*}
By the preceding paragraph,
\[
\limsup_{N \to \infty} \|T_{\bar{B}_{N}}f\|_{H^p}  \leq c_p \varepsilon.
\]
Since $\varepsilon>0$ is arbitrary, the result follows.
\end{proof}

Despite the above proof resembling that of Theorem \ref{T:conv-1-rkhs}, for just a harmless multiplicative factor $c_p$ shows up at the end of the proof, it is important to note that it heavily rests upon the M. Riesz theorem on the boundedness of the Hilbert transform.

One may naturally wonder what happens if $B_N$ does not converge to zero in the topology of $\mbox{Hol}(\mathbb{D})$. In this case,
\[
B := \prod_{n=1}^{\infty} b_n
\]
is a well-defined, not identically zero, function in the closed unit ball of $H^\infty$ and a minor modification of Theorem \ref{T:conv-1} implies the following result.

\begin{theorem} \label{T:conv-2}
Let $(b_n)_{n \geq 1}$ be a sequence of elements in the closed unit ball of $H^\infty$. Write $B_0=1$ and
\[
B_n := b_1b_2\cdots b_n, \qquad n \geq 1.
\]
Assume that
\[
B := \prod_{n=1}^{\infty} b_n,
\]
where the product converges uniformly on compact subsets of $\mathbb{D}$ to the element
$B \in H^\infty$.
Then, for each $f \in H^p$,
\[
f = BT_{\bar{B}}f + \sum_{n=1}^{\infty} B_{n-1}\cdot  Q_{b_n}T_{\bar{B}_{n-1}}f,
\]
where the series converges in $H^p$.
\end{theorem}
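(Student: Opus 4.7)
The plan is to mirror the proof of Theorem~\ref{T:conv-1}, with the only modification being that the remainder term $B_N T_{\bar{B}_N} f$ now converges to the nontrivial limit $B T_{\bar{B}} f$ rather than to zero. The same induction that produced the telescoping identity in Theorem~\ref{T:conv-1} still gives
\[
f = \sum_{n=1}^{N} B_{n-1}\, Q_{b_n} T_{\bar{B}_{n-1}} f + B_N T_{\bar{B}_N} f, \qquad N \geq 1,
\]
so the theorem reduces to showing that $B_N T_{\bar{B}_N} f \to B T_{\bar{B}} f$ in $H^p$.

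First I would show that $T_{\bar{B}_N} f \to T_{\bar{B}} f$ in $H^p$-norm for every $f \in H^p$. On a Cauchy kernel $k_\lambda$, one has $T_{\bar{B}_N} k_\lambda = \overline{B_N(\lambda)}\, k_\lambda \to \overline{B(\lambda)}\, k_\lambda = T_{\bar{B}} k_\lambda$ in $H^p$, simply because the scalar $B_N(\lambda)$ converges to $B(\lambda)$. Density of the span of Cauchy kernels, together with the uniform bound $\|T_{\bar{B}_N}\|_{H^p \to H^p} \leq c_p$ supplied by \eqref{E:toeplitz-0}, extends this to arbitrary $f \in H^p$ via the same $\varepsilon$-approximation as in Theorem~\ref{T:conv-1}.

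The next step is the splitting
\[
B_N T_{\bar{B}_N} f - B T_{\bar{B}} f = B_N \bigl( T_{\bar{B}_N} f - T_{\bar{B}} f \bigr) + (B_N - B)\, T_{\bar{B}} f.
\]
The first piece is controlled by $\|T_{\bar{B}_N} f - T_{\bar{B}} f\|_{H^p}$ and vanishes by the previous paragraph. The decisive obstacle is the second piece: with $g := T_{\bar{B}} f \in H^p$ fixed, I need $(B_N - B)\, g \to 0$ in $H^p$, yet $B_N$ is not known to converge to $B$ in $L^\infty$ or even pointwise a.e.\ on $\mathbb{T}$.

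To overcome this, I would introduce the tail product $c_N := \prod_{n>N} b_n$, so that $B = B_N c_N$ and $B_N - B = B_N(1 - c_N)$. Uniform convergence on compacta gives $c_N(0) \to 1$; combined with $|c_N(0)| \leq \|c_N\|_{L^2(\mathbb{T})} \leq \|c_N\|_{\infty} \leq 1$, this forces $\|c_N\|_{L^2}^2 \to 1$, and the identity
\[
\|c_N - 1\|_{L^2}^2 = \|c_N\|_{L^2}^2 - 2\operatorname{Re} c_N(0) + 1
\]
then yields $c_N \to 1$ in $L^2(\mathbb{T})$, and in particular in measure. Since $|B_N - B| \leq |1 - c_N| \leq 2$ on $\mathbb{T}$ and $|(B_N - B)g|^p$ is pointwise dominated by the $L^1$-function $2^p |g|^p$, Vitali's theorem (convergence in measure with an $L^1$ dominator) forces $\|(B_N - B)\, g\|_{L^p} \to 0$, closing the argument.
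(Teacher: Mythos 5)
The paper offers no proof of Theorem~\ref{T:conv-2} at all --- it merely asserts that ``a minor modification of Theorem~\ref{T:conv-1}'' yields it --- so your write-up goes beyond what the authors record, and it correctly identifies the one step that is genuinely not minor. The telescoping identity and the convergence $T_{\bar{B}_N}f\to T_{\bar{B}}f$ (eigenvector relation on Cauchy kernels, density, and the uniform bound \eqref{E:toeplitz-0}) are indeed routine transplants of the proof of Theorem~\ref{T:conv-1}; the real issue is $(B_N-B)\,T_{\bar{B}}f\to 0$ in $L^p(\mathbb{T})$, which does not follow from locally uniform convergence of $B_N$ in $\mathbb{D}$ alone (think of $z^N\to 0$). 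Your resolution --- factor $B=B_Nc_N$, get $c_N\to 1$ in $L^2(\mathbb{T})$ from $c_N(0)\to 1$ combined with $|c_N(0)|\le\|c_N\|_{L^2}\le 1$ and the expansion of $\|c_N-1\|_{L^2}^2$, then pass from convergence in measure to $L^p$-convergence of $(1-c_N)g$ by domination --- is correct and is essentially the standard device for such tail products. Two small points should be made explicit in a final version. First, the existence of $c_N=\prod_{n>N}b_n$ as an element of the closed unit ball of $H^\infty$ with $B=B_Nc_N$ on $\mathbb{D}$ (hence a.e.\ on $\mathbb{T}$) needs a line: the finite tail products $B_M/B_N$ are bounded by $1$ and converge pointwise off the zero set of $B_N$, so Vitali--Montel applies; this presupposes $B\not\equiv 0$, which is the intended setting (if $B\equiv 0$ the statement collapses to Theorem~\ref{T:conv-1}). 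Second, ``uniform convergence on compacta gives $c_N(0)\to 1$'' is immediate only when $B(0)\neq 0$, via $c_N(0)=B(0)/B_N(0)$; if $B(0)=0$ you should observe that $c_N=B/B_N\to 1$ uniformly on a circle $|z|=r$ avoiding the isolated zeros of $B$ and invoke the maximum principle to conclude $c_N(0)\to 1$. With those two remarks added, your proof is complete.
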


\section{Taylor series} \label{S:taylor}

If $b(z)=z$, then the Toeplitz operator $T_z$ is the {\em unilateral forward shift} operator. By the same token, $T_{\bar{z}}$ is the {\em unilateral backward shift} operator. We denote them respectively by $\mathbf{S}$ and $\mathbf{Z}$. In fact, a rather standard notation for the backward shift is $\mathbf{B}$. However, in this note, in order to avoid the consfusion with Blaschke products, we temporarily use $\mathbf{Z}$ for the backward shift. Note that when we consider them as bounded operators on the Hardy--Hilbert space $H^2$, it is legitimate to write $\mathbf{Z}=\mathbf{S}^*$. However, for other Hardy spaces, this identity is meaningless.

Using the notation of Theorem \ref{T:conv-1}, let
\[
b_n(z)=z, \qquad n\geq 1.
\]
Then
\[
\lim_{n \to \infty} B_n(z) = \lim_{n \to \infty} z^n = 0, \qquad z \in \mathbb{D}.
\]
Moreover,
\[
Q_{b_n} = I-T_{b_n}T_{\bar{b}_n} = I-T_{z}T_{\bar{z}} = I-\mathbf{S}\mathbf{Z},
\]
and thus, for $f \in H^p$,
\[
Q_{b_n}T_{\bar{B}_{n-1}}f = (I-\mathbf{S}\mathbf{Z})\mathbf{Z}^{n-1} f = \frac{f^{(n-1)}(0)}{(n-1)!}, \qquad n \geq 1.
\]
Since $B_{n-1}(z) = z^{n-1}$, the expansion
\[
f = \sum_{n=1}^{\infty} B_{n-1}\cdot  Q_{b_n}T_{\bar{B}_{n-1}}f
\]
reduces to
\[
f(z) = \sum_{n=1}^{\infty} \frac{f^{(n-1)}(0)}{(n-1)!} \cdot z^{n-1},
\]
which is precisely the Taylor series expansion of $f$.

\section{Blaschke unwinding series} \label{Blaschke-section}
In this section and the next, we study some special cases of the expansion described in Theorems \ref{T:conv-1} and \ref{T:conv-2}. Moreover, the decomposition idea for the case treated in this section is borrowed from \cite{FMN}, where the development is studied in the more general setting of $H^p$ spaces. Here, for simplicity, we just treat the case $p=2$ and provide a sketch of proofs to show that our abstract setting implies the classical Blaschke unwinding series as a special case.

Let $\lambda \in \mathbb{D}$ and let
\[
b_{\lambda}(z) := \frac{\lambda-z}{1-\bar{\lambda}z}, \qquad z \in \mathbb{D}.
\]
It is well known that $b_{\lambda}$ is an automorphism of the disc $\mathbb{D}$. In this case, $P_{b_\lambda}$ is the orthogonal projection of $H^2$ onto the Beurling subspace $b_{\lambda} H^2$, and $Q_{b_\lambda}$ is the orthogonal projection  of $H^2$ onto the model space
\[
K_{b_\lambda} = \mathbb{C} k_{\lambda}.
\]
In this case, we can provide an explicit formula for $Q_{b_\lambda}$. Given $f \in H^2$, put
\[
g := f - f(\lambda) \frac{k_{\lambda}}{k_{\lambda}(\lambda)}.
\]
Then clearly $g \in H^2$ and $g(\lambda)=0$. Hence, by a theorem of F. Riesz \cite[Page 167]{MR2500010}, $g= b_{\lambda}h$ for some $h \in H^2$. Therefore, we can write
\[
f = \frac{f(\lambda)}{k_{\lambda}(\lambda)} k_{\lambda} + b_{\lambda}h.
\]
This is precisely the orthogonal decomposition of $f$ with the first component coming from $K_{b_\lambda}$ and the second from $b_{\lambda} H^2$. According to this decomposition, we conclude that
\begin{equation}\label{E:proj-kb}
Q_{b_\lambda}f = \frac{f(\lambda)}{k_{\lambda}(\lambda)} k_{\lambda} = (1-|\lambda|^2)f(\lambda) k_{\lambda}.
\end{equation}

Now let $(\lambda_n)_{n \geq 1}$ be a sequence in $\mathbb{D}$. There are two possibilities, which are described in the following two corollaries. In each case, we are faced with the Takenaka--Malmquist--Walsh basis \cite[Ch. 5]{MR3526203}. See also \cite{MR2572653, MR1568210}.

\begin{corollary} \label{Cor:hp-blaschke}
Let $(\lambda_n)_{n \geq 1}$ be a non-Blaschke sequence in $\mathbb{D}$, i.e.,
\[
\sum_{n=1}^{\infty} (1-|\lambda_n|) = \infty.
\]
Let $B_0=1$ and
\[
B_n(z) = \prod_{k=1}^{n} \frac{\lambda_k-z}{1-\bar{\lambda}_kz}, \qquad n \geq 1.
\]
Let $f$ be entire. Then
\begin{equation}\label{E:our-B-rep}
f = f(\lambda_1) + \sum_{n=1}^{\infty} \big( (T_{\bar{B}_{n}}f)(\lambda_{n+1}) - \bar{\lambda}_n (T_{\bar{B}_{n-1}}f)(\lambda_n) \big) B_n,
\end{equation}
where the series converges in $H^2$-norm.
\end{corollary}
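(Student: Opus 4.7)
The plan is to specialize Theorem~\ref{T:conv-1} to $p=2$ and $b_n = b_{\lambda_n}$, exploit the explicit form of $Q_{b_{\lambda_n}}$ coming from \eqref{E:proj-kb}, and then rearrange the resulting series by a telescoping identity. First I verify the hypothesis of Theorem~\ref{T:conv-1}: the non-Blaschke condition, combined with the pointwise estimate $1-|b_{\lambda_k}(z)|^2 \geq (1-|\lambda_k|^2)(1-|z|^2)/4$ and the inequality $1-x \leq -\log x$ on $(0,1]$, forces $B_n(z)\to 0$ for each $z \in \mathbb{D}$. Writing $c_n := (T_{\bar{B}_{n-1}}f)(\lambda_n)$, Theorem~\ref{T:conv-1} together with the formula $Q_{b_{\lambda_n}}g = (1-|\lambda_n|^2)g(\lambda_n)k_{\lambda_n}$ gives
\[
f = \sum_{n=1}^{\infty} (1-|\lambda_n|^2)\, c_n\, B_{n-1}\, k_{\lambda_n} \quad \text{in } H^2.
\]

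The elementary identity $(1-|\lambda|^2)k_\lambda(z) = 1 - \bar{\lambda}\, b_\lambda(z)$, verified by direct computation, reduces the $n$-th summand to $c_n B_{n-1} - \bar\lambda_n c_n B_n$. A telescoping rearrangement turns the partial sum $S_N = \sum_{n=1}^N c_n(B_{n-1}-\bar\lambda_n B_n)$ into
\[
S_N = f(\lambda_1) + \sum_{m=1}^{N-1}\bigl((T_{\bar{B}_m}f)(\lambda_{m+1}) - \bar\lambda_m (T_{\bar{B}_{m-1}}f)(\lambda_m)\bigr)B_m - \bar\lambda_N c_N B_N,
\]
using $c_1=f(\lambda_1)$ and recognizing $c_{m+1}-\bar\lambda_m c_m$ as the bracketed coefficient.

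The principal technical obstacle is to show that the residual $\bar\lambda_N c_N B_N$ vanishes in $H^2$; since $\|B_N\|_{H^2}=1$, this reduces to $|c_N|\to 0$. Here the entirety of $f$ becomes essential. Fix $r>1$ and set $M_r := \max_{|\zeta|=r}|f(\zeta)|$. Using $\overline{B_{N-1}} = B_{N-1}^{-1}$ on $\mathbb{T}$ and the Cauchy integral for $T_{\bar{B}_{N-1}}f$, together with the fact that $f/B_{N-1}$ is analytic on $\{|\zeta|\geq 1\}$ (as the only poles of $B_{N-1}^{-1}$ lie in $\mathbb{D}$), I deform the contour to $|\zeta|=r$ to obtain
\[
|c_N| \leq \frac{r M_r}{(r-1)\,\min_{|\zeta|=r}|B_{N-1}(\zeta)|}.
\]
The identity $|b_\lambda(\zeta)|^2 - 1 = (1-|\lambda|^2)(|\zeta|^2-1)/|1-\bar\lambda\zeta|^2$ for $|\zeta|>1$ combined with $|1-\bar\lambda\zeta|\leq 1+r$ yields $\log|b_{\lambda_k}(\zeta)|^2 \geq c(r)(1-|\lambda_k|)$ uniformly on $|\zeta|=r$ for some $c(r)>0$; summing and invoking $\sum(1-|\lambda_k|)=\infty$ forces $\min_{|\zeta|=r}|B_{N-1}(\zeta)|\to\infty$, hence $c_N\to 0$. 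Combined with $S_N\to f$ in $H^2$, this establishes \eqref{E:our-B-rep}.
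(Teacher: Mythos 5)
Your proof follows the same route as the paper's: specialize Theorem \ref{T:conv-1} to $p=2$ with $b_n=b_{\lambda_n}$, use the rank-one formula \eqref{E:proj-kb} for $Q_{b_{\lambda_n}}$, substitute $(1-|\lambda|^2)k_\lambda = 1-\bar{\lambda}b_\lambda$, and rearrange to reach \eqref{E:our-B-rep}. The one point worth noting is that the paper only asserts ``after a rearrangement, the representation follows'' and defers the justification (which is precisely where the hypothesis that $f$ is entire enters) to \cite{FMN}, whereas your telescoping identity together with the contour-deformation estimate showing $(T_{\bar{B}_{N-1}}f)(\lambda_N)\to 0$ --- using $\min_{|\zeta|=r}|B_{N-1}(\zeta)|\to\infty$ from the non-Blaschke condition --- correctly and completely supplies that missing step.
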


\begin{proof}
In this case,
\[
\lim_{n \to \infty} B_n(z) = \lim_{n \to \infty} \prod_{k=1}^{n} \frac{\lambda_k-z}{1-\bar{\lambda}_kz}  = 0, \qquad z \in \mathbb{D},
\]
and thus Theorem \ref{T:conv-1} applies. By \eqref{E:proj-kb},
\[
Q_{b_{\lambda_n}}T_{\bar{B}_{n-1}}f = (1-|\lambda_{n}|^2)(T_{\bar{B}_{n-1}}f)(\lambda_n) k_{\lambda_n}, \qquad n \geq 1.
\]
Therefore, each $f \in H^2$ has the  decomposition
\begin{equation}\label{E:dec-f-ortho1}
f = \sum_{n=1}^{\infty} (1-|\lambda_{n}|^2)(T_{\bar{B}_{n-1}}f)(\lambda_n)  B_{n-1}k_{\lambda_n}.
\end{equation}
Note that the orthonormal basis of $H^2$ in this decomposition is
\[
(1-|\lambda_{n}|^2)^{1/2}B_{n-1}k_{\lambda_n}, \qquad n \geq 1,
\]
and the corresponding Fourier coefficients of $f$ with respect to this basis are
\[
(1-|\lambda_{n}|^2)^{1/2}(T_{\bar{B}_{n-1}}f)(\lambda_n), \qquad n \geq 1.
\]

It is easy to see that $b_{\lambda}$ and $k_{\lambda}$ are related via the linear equation
\[
(1-|\lambda|^2)k_{\lambda} + \bar{\lambda} b_{\lambda} = 1.
\]
If we solve for $k_{\lambda}$ and plug it into \eqref{E:dec-f-ortho1} (with $\lambda=\lambda_n$) and recalling that $B_{n-1}b_{\lambda_n} = B_{n}$, we obtain
\[
f = \sum_{n=1}^{\infty} (T_{\bar{B}_{n-1}}f)(\lambda_n)\left( B_{n-1} - \bar{\lambda}_n B_n\right).
\]
The convergence is in $H^2$ and all the coefficients of the $B_k$s are scalars. After a rearrangement, the representation \eqref{E:our-B-rep} follows. 
\end{proof}

A remark is in order concerning the last part of the above proof. Even though it is not visible at first glance, a stronger assumption is needed to ensure the convergence after the rearrangement. That $f$ is assumed to be entire is enough for us and covers the classical setting. However, it can be slightly generalized to functions analytic on the closed unit disc. See \cite{FMN}.

The expansion \eqref{E:our-B-rep} is a very strong form of the expansion \eqref{coifman-expansion}. It is rather surprising that we do not impose heavy restrictions on $\lambda_n$. Let us explain what happens in the special case of \eqref{coifman-expansion}. Here, in the first step we set
\begin{equation}\label{E:tmp-f0-f0}
f(z)-f(0) = \mathbf{B}_1(z) f_1(z),
\end{equation}
where $\mathbf{B}_1$ is the finite Blaschke product formed with the zeros
\[
\lambda_1=0, \, \lambda_2, \, \dots, \, \lambda_N
\]
of $f(z)-f(0)$ on $\mathbb{D}$. Therefore, respecting the notation of the more general representation \eqref{E:our-B-rep}, we have
\begin{equation}\label{E:tmp-f0-f02}
\mathbf{B}_1 = B_N = b_{\lambda_1} b_{\lambda_2}\cdots b_{\lambda_N}.
\end{equation}
Note that $f_1$ is analytic on $\overline{\mathbb{D}}$ and has no roots in $\mathbb{D}$. However, the next set of zeros
\[
\lambda_{N+1} = 0, \, \lambda_{N+2}, \, \dots, \, \lambda_{M},
\]
that we choose are the zeros of $f_1(z)-f_1(0)$. As it is clear now, the origin repeats infinitely many times in this sequence and thus, after all, it is a non-Blaschke sequence. Moreover, by \eqref{E:tmp-f0-f0} and \eqref{E:tmp-f0-f02}, for $1 \leq n \leq N-1$,
\[
\bar{B}_{n} f = \bar{B}_{n}f(0) + \bar{B}_{n} \mathbf{B}_1 f_1 = \bar{B}_{n}f(0) + b_{\lambda_{n+1}} \cdots b_{\lambda_N} f_1
\]
and, for $n=N$,
\[
\bar{B}_{N} f = \bar{B}_{N}f(0) + \bar{B}_{N} \mathbf{B}_1 f_1 = \bar{B}_{N}f(0) +  f_1.
\]
Therefore,
\[
T_{\bar{B}_{n}} f =  b_{\lambda_{n+1}} \cdots b_{\lambda_N} f_1, \qquad 1 \leq n \leq N-1,
\]
and
\[
T_{\bar{B}_{N}} f =  f_1.
\]
Note that we implicitly used the fact that $\lambda_1=0$ and thus $P_{+}(\bar{B}_{n})=0$. Hence,
for $1 \leq n \leq N-1$,
\[
(T_{\bar{B}_{n}}f)(\lambda_{n+1}) - \bar{\lambda}_n (T_{\bar{B}_{n-1}}f)(\lambda_n)  = 0
\]
and, for $n=N$,
\[
(T_{\bar{B}_{N}}f)(\lambda_{N+1}) - \bar{\lambda}_N (T_{\bar{B}_{N-1}}f)(\lambda_N)  = f_1(\lambda_{N+1}) = f_1(0).
\]
In short, the expansion \eqref{E:our-B-rep} becomes
\[
f = f(0) + f_1(0) B_N + \sum_{n=N+1}^{\infty} \big( (T_{\bar{B}_{n}}f)(\lambda_{n+1}) - \bar{\lambda}_n (T_{\bar{B}_{n-1}}f)(\lambda_n) \big) B_n,
\]
which we rewrite as
\begin{equation}\label{E:rep-bwin-2}
f = f(0) + f_1(0) \mathbf{B}_1 + \sum_{n=N+1}^{\infty} \big( (T_{\bar{B}_{n}}f)(\lambda_{n+1}) - \bar{\lambda}_n (T_{\bar{B}_{n-1}}f)(\lambda_n) \big) B_n.
\end{equation}
The first two terms on the right side of \eqref{E:rep-bwin-2} are precisely the first two terms in the classical Blaschke unwinding series \eqref{coifman-expansion}. As a matter of fact, it is now easy to complete the above line of reasoning and see that in \eqref{E:rep-bwin-2}, many terms are zero and it eventually reduces to the classical setting \eqref{coifman-expansion}.

\begin{corollary} \label{Cor:hp-non-blaschke}
Let $(\lambda_n)_{n \geq 1}$ be a Blaschke sequence in $\mathbb{D}$, i.e.,
\[
\sum_{n=1}^{\infty} (1-|\lambda_n|) < \infty.
\]
Let $B_0=1$,
\[
B_n(z) = \prod_{k=1}^{n} \frac{\lambda_k-z}{1-\bar{\lambda}_kz}, \qquad n \geq 1,
\]
and
\[
B(z) = \prod_{n=1}^{\infty} \frac{|\lambda_n|}{\lambda_n} \, \frac{\lambda_n-z}{1-\bar{\lambda}_nz}.
\]
Let $f$ be entire. Then
\[
f = BT_{\bar{B}}f + \Big( f(\lambda_1) + \sum_{n=1}^{\infty} \big( (T_{\bar{B}_{n}}f)(\lambda_{n+1}) - \bar{\lambda}_n (T_{\bar{B}_{n-1}}f)(\lambda_n) \big) B_n \Big),
\]
where the series converges in $H^2$-norm.
\end{corollary}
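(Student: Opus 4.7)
The plan is to mimic the proof of Corollary \ref{Cor:hp-blaschke}, but with Theorem \ref{T:conv-2} replacing Theorem \ref{T:conv-1}. First, I would observe that since $(\lambda_n)$ is a Blaschke sequence, the infinite product $B$ converges uniformly on compact subsets of $\mathbb{D}$ to a non-trivial element of the closed unit ball of $H^\infty$, and the finite products $B_n$ differ from the partial products of $B$ only by unimodular constants, so we are under the hypotheses of Theorem \ref{T:conv-2}. That theorem immediately gives the decomposition
\[
f = B T_{\bar{B}} f + \sum_{n=1}^{\infty} B_{n-1}\cdot Q_{b_{\lambda_n}} T_{\bar{B}_{n-1}} f,
\]
with the series converging in $H^2$-norm.

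Next, I would apply the explicit formula \eqref{E:proj-kb} to each summand to obtain
\[
B_{n-1}\cdot Q_{b_{\lambda_n}} T_{\bar{B}_{n-1}} f = (1-|\lambda_n|^2)(T_{\bar{B}_{n-1}} f)(\lambda_n)\, B_{n-1} k_{\lambda_n}.
\]
Then I would invoke the linear identity $(1-|\lambda|^2) k_{\lambda} + \bar{\lambda} b_{\lambda} = 1$ with $\lambda = \lambda_n$, multiply through by $B_{n-1}$, and use $B_{n-1} b_{\lambda_n} = B_n$, so that each term becomes $(T_{\bar{B}_{n-1}} f)(\lambda_n)\bigl(B_{n-1} - \bar{\lambda}_n B_n\bigr)$.

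Finally, I would rearrange the sum by gathering the coefficients of each $B_n$. With $B_0 = 1$, the coefficient of $B_0$ reduces to $(T_{\bar{B}_0} f)(\lambda_1) = f(\lambda_1)$, while for $n \geq 1$ the coefficient of $B_n$ becomes $(T_{\bar{B}_n} f)(\lambda_{n+1}) - \bar{\lambda}_n (T_{\bar{B}_{n-1}} f)(\lambda_n)$. Adding back the leading term $B T_{\bar{B}} f$ supplied by Theorem \ref{T:conv-2} yields the claimed expansion.

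The main obstacle is the rearrangement step: Theorem \ref{T:conv-2} only furnishes $H^2$-norm convergence of the original series, and regrouping into coefficients of the $B_n$'s is not automatic in this topology. As the remark following Corollary \ref{Cor:hp-blaschke} emphasizes, this is exactly the point where the assumption that $f$ is entire (more generally, analytic on $\overline{\mathbb{D}}$) is needed; it supplies the decay of $(T_{\bar{B}_n} f)(\lambda_{n+1})$ required to legitimize the reordering, in the same spirit as the argument in \cite{FMN}.
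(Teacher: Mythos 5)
Your proposal is correct and follows essentially the same route as the paper: invoke Theorem \ref{T:conv-2} for the leading term $BT_{\bar{B}}f$, apply the explicit formula \eqref{E:proj-kb} for $Q_{b_{\lambda_n}}$, substitute via the identity $(1-|\lambda|^2)k_\lambda + \bar{\lambda}b_\lambda = 1$ together with $B_{n-1}b_{\lambda_n}=B_n$, and rearrange exactly as in Corollary \ref{Cor:hp-blaschke}. You also correctly flag the rearrangement as the step requiring $f$ to be entire, which is precisely the caveat the paper records in the remark following Corollary \ref{Cor:hp-blaschke}.
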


\begin{proof}
In this case, $B$ is a well-defined infinite Blaschke product and thus Theorem \ref{T:conv-2} applies. More explicitly, each $f \in H^2$ has the  decomposition
\begin{equation}\label{E:dec-f-ortho2}
f = BT_{\bar{B}}f
+ \sum_{n=1}^{\infty} (1-|\lambda_n|^2) (T_{\bar{B}_{n-1}}f)(\lambda_n)  B_{n-1}k_{\lambda_n}.
\end{equation}
This is precisely the explicit description of the orthogonal decomposition $H^2 = BH^2 \oplus K_B$. The orthonormal basis of $K_B$ in this decomposition is
\[
(1-|\lambda_n|^2)^{1/2} B_{n-1}k_{\lambda_n}, \qquad n \geq 1,
\]
and the corresponding Fourier coefficients of the projection of $f$ onto $K_B$ are
\[
(1-|\lambda_n|^2)^{1/2} (T_{\bar{B}_{n-1}}f)(\lambda_n), \qquad n \geq 1.
\]
The rest of proof is the same as the proof of Corollary \ref{Cor:hp-blaschke}.
\end{proof}

When $p \ne 2$, the terms in the above expansions are not orthogonal. But, in technical language, Corollaries \ref{Cor:hp-blaschke} and \ref{Cor:hp-non-blaschke} give us Schauder bases for $H^p$ and $K_B$, respectively. Some other Schauder basis of rational functions (not finite Blaschke products) are presented in \cite{MR3195914} for $H^p$ spaces.

\section{Unwinding series with outer functions} \label{S:outer-expansion}
In this section, we explore the development created by the outer function
\[
b(z)=\frac{z-1}{2}, \qquad z \in \mathbb{D}.
\]
We take $b_1=b_2=\cdots=b$. Hence, clearly
\[
\lim_{n \to \infty} B_n(z) = \lim_{n \to \infty} b_1(z)b_2(z)\cdots b_n(z) = \lim_{n \to \infty} \left( \frac{z-1}{2} \right)^n = 0, \qquad z \in \mathbb{D}.
\]
Therefore, Theorem \ref{T:conv-1} applies and we have, for each $f \in H^p$,
\begin{eqnarray*}
f = \sum_{n=1}^{\infty} B_{n-1} Q_{b_n}T_{\bar{B}_{n-1}}f
= \sum_{n=1}^{\infty} \frac{Q_{b}T_{\bar{B}_{n-1}}f}{2^{n-1}} \, (z-1)^{n-1},
\end{eqnarray*}
where the series converges in $H^p$-norm. We can provide a more familiar formula for $Q_{b}$. Note that
\[
T_b = \frac{\mathbf{S}-I}{2}
\quad \mbox{and} \quad
T_{\bar{b}} = \frac{\mathbf{Z}-I}{2}.
\]
Recall the definition of $\mathbf{S}$ and $\mathbf{Z}$ from Section \ref{S:taylor}. Therefore,
\begin{eqnarray*}
Q_{b} &=& I - T_b T_{\bar{b}}\\
&=& I - \frac{\mathbf{S}-I}{2} \, \frac{\mathbf{Z}-I}{2}\\
&=& I - \frac{1}{4} (\mathbf{S}\mathbf{Z}-\mathbf{S}-\mathbf{Z}+I)\\
&=& \frac{1}{4} (2I+k_0 \otimes k_0 +\mathbf{S}+\mathbf{Z}).
\end{eqnarray*}



\section*{Declarations} 
\subsection*{Conflict of interest}
On behalf of all authors, the corresponding author states that there is no conflict of interest.

\subsection*{Competing interests}
On behalf of all authors, the corresponding author states that there are no competing interests.

\subsection*{Funding information}
This work was supported by the NSERC Discovery Grant (Canada), and graduate scholarships from NSERC and FRQNT (Quebec).

\subsection*{Author contribution}
All authors wrote the manuscript and reviewed the final version.

\bibliographystyle{plain}
\bibliography{OSC-references}

\end{document}